\newtheorem{theorem}{Theorem}[section] 
\newtheorem{lemma}[theorem]{Lemma}     
\newtheorem{proposition}[theorem]{Proposition}
\newcommand{\R}{\mathbb{R}}
\title[On Bredon's trick]
 {On Bredon's trick: from local to global properties} 
\author{Mauricio Angel}
\journal{}
\begin{document}
\maketitle

\begin{abstract}
One of the most interesting problems that arise when studying certain structures on topological spaces and in particular on differential manifolds, is to be able to extend the properties that are valid locally to the whole space. A useful tool, which has perhaps been underestimated, is a lemma introduced by G. Bredon, which we refer to as Bredon's trick and which allows the extension of local properties to certain topological spaces. We make a review of this result and show its application in the context of De Rham's cohomology, we will see how this trick allows to give natural alternative demonstrations to classic results, as well as it is fundamental in other cases such as in stratified pseudo-manifolds.
\end{abstract}

\section{Introduction}

Bredon's trick is a lemma introduced by G. Bredon which allows to extend on a topological space with certain conditions properties that are locally valid to the whole space. In \cite{Bredon} Bredon refers to the fact that this lemma was originally introduced in 1962 while he was teaching a course on Lie Groups, however, as far as the author knows, there are no references where the lemma is found but in \cite{Bredon} where it is used to prove De Rham's theorem. From the Bredon's proof it is easy to conclude that it is a Mayer-Vietoris-style argument, but since the statement is sufficiently general, it would be more related to the Compactness theorem from Model Theory \cite{Paseau}, so it is natural to think that it is a tool with great potential for the demonstration of global results, from local arguments.

\smallskip

The local-to-global arguments are particularly useful in differential geometry, because they allow reducing the study of a certain property on a space, to the study in local terms which generally entails a minor difficulty. There are many examples \cite{Kobayashi} where this type of arguments can be seen, for example, recently in \cite{Nariman}, Thurston's theorem which identifies the homology of classifying spaces of homeomorphism on a manifold of dimension not greater than 3, was proved using an argument of the local-global type, which avoids the use of the theory of foliations.

\smallskip

One of the areas where this type of arguments can be very useful is in topological analysis of data, with the advent of the Big Data and the development of algorithms to process large amounts of data, one of the lines of study has to do with the dimensionality reduction so that the geometric structure of the data is preserved, however even with the various techniques to perform this reduction, in some cases is still a problem the handling of these data, so it requires more efficient algorithms, see for example \cite{Agnes}.

\smallskip

In this sense, it seems pertinent to us to make a complete review of this result, which has been used in several cases and yet has not been given the relevance it has, in addition to the potential it has as a tool for the study of properties on topological spaces and manifolds.In this paper we give a complete review to the Bredon's trick, in section \ref{preliminar} we have included concepts and preliminary results needed to make the test as clear and self-contained as possible. In section \ref{truco} we present Bredon's trick with the adapted proof of \cite{Mangel}. Section \ref{ejemplos} is dedicated to present some results in which the Bredon's trick has been used as a fundamental tool for its demonstration, although it is true that some are classic results, the last two results on stratified pseudo-manidfolds have more complexity and the use of Bredon's trick is fundamental for the proof.

\section{Preliminaries}\label{preliminar}

\begin{definition}
Given a topological space X, a cover $\{U_\alpha\}_{\in\Lambda}$ is said to be locally finite if for every $x\in X$ there is an open $U\subset X$ such that $x\in U$ and 
\[|\{\alpha\in\Lambda:\, U_\alpha\cap U\neq\emptyset\}|<\infty\]
\end{definition}

\begin{definition}
For two covers $\{V_\beta\},\, \{U_\alpha\}$ we say that $\{V_\beta\}$ is a refinement for $\{U_\alpha\}$ if for each $\beta$ there exists $\alpha$ such that
\[V_\beta\subset U_\alpha\]
\end{definition}

The concept of refinement induces a partial order on all the covers.

\begin{definition}
A topological space $X$ is called para-compact if every open cover has an open locally finite refinement.
\end{definition}

\begin{definition}
Given a continuous map $p:X\to\R$ we define the support of p, $Supp(p)$ by
\[Supp(p)=\overline{\{x\in X:\, p(x)\neq 0\}}\]
\end{definition}

\begin{definition}
For a topological space $X$ a partition of unity is a family of continuous functions $\{p_\alpha:X\to [0,1]\}_{\alpha\in\Lambda}$ such that:
\begin{enumerate}
\item The cover $\{Supp(p_\alpha)\}_{\alpha\in\Lambda}$ is locally finite.
\item $\displaystyle{\sum_{\alpha\in\Lambda}p_\alpha(x)=1.}$
\end{enumerate}
\end{definition}

The sum in the previous definition is defined because the locally finite condition on $\{Supp(p_\alpha)\}$

\begin{definition}
Given a cover $\{U_\alpha\}$ and a partition of unity $\{p_\alpha\}$, we say that the partition of unity is subordinate to the cover if for each $\alpha$ there exists $\beta$ such that
\[Supp(p_\alpha)\subset U_\beta\]
\end{definition}

\begin{proposition}
Given a para-compact space $X$ and an open cover $\{U_\alpha\}$, then  there is a partition of unity subordinate to the cover $\{U_\alpha\}$
\end{proposition}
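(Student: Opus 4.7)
The plan is to build the partition of unity in three stages: first shrink the given cover to have good closure properties, then manufacture continuous bump functions on each piece using normality, and finally normalize. I will assume throughout that $X$ is Hausdorff (needed implicitly so that paracompactness yields normality), since otherwise partitions of unity need not exist.

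First I would use the paracompactness hypothesis to replace $\{U_\alpha\}$ by a locally finite open refinement $\{V_\beta\}_{\beta\in B}$, with a chosen function $\tau:B\to \Lambda$ such that $V_\beta \subset U_{\tau(\beta)}$ for every $\beta$. It suffices to construct a partition of unity $\{q_\beta\}$ with $\mathrm{Supp}(q_\beta)\subset V_\beta$, since grouping $p_\alpha := \sum_{\tau(\beta)=\alpha} q_\beta$ then produces a partition subordinate to the original cover (local finiteness of the $V_\beta$'s ensures the grouped sums remain locally finite).

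Second, the key technical step is the \emph{shrinking lemma}: from the locally finite open cover $\{V_\beta\}$ I would extract a locally finite open cover $\{W_\beta\}$ with $\overline{W_\beta}\subset V_\beta$, and iterating once more a locally finite open cover $\{W'_\beta\}$ with $\overline{W'_\beta}\subset W_\beta$. This is where paracompactness really earns its keep: paracompact Hausdorff spaces are normal, so one can shrink each $V_\beta$ slightly using Urysohn-type separation, and a transfinite induction (or a Zorn's lemma argument on the set of partial shrinkings) ensures the shrunken sets still cover $X$. I expect this to be the main obstacle, as the inductive step must verify that removing a single $V_\beta$ and replacing it by a smaller $W_\beta$ does not uncover any point, which uses local finiteness in an essential way.

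Third, with the two-step shrinking in hand, I would invoke Urysohn's lemma on the disjoint closed sets $\overline{W'_\beta}$ and $X\setminus W_\beta$ to produce continuous functions $f_\beta:X\to[0,1]$ equal to $1$ on $\overline{W'_\beta}$ and $0$ outside $W_\beta$, so that $\mathrm{Supp}(f_\beta)\subset\overline{W_\beta}\subset V_\beta$. Local finiteness of $\{V_\beta\}$ makes the sum $f(x):=\sum_\beta f_\beta(x)$ a locally finite, hence continuous, function, and the covering property of $\{W'_\beta\}$ guarantees $f(x)\geq 1$ everywhere. Finally I would set $q_\beta := f_\beta/f$; these are continuous, sum to $1$, have $\mathrm{Supp}(q_\beta)\subset V_\beta$, and inherit local finiteness of supports from $\{V_\beta\}$. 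Grouping by $\tau$ as above yields the desired partition of unity subordinate to $\{U_\alpha\}$.
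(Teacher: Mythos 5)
Your proposal is correct and follows essentially the same route as the paper's proof: pass to a locally finite open refinement, shrink it using normality of the (paracompact Hausdorff) space, build Urysohn bump functions, and normalize by the locally finite sum. Your version is a bit more careful than the paper's --- the second shrinking guarantees $\mathrm{Supp}(q_\beta)\subset V_\beta$ rather than merely $\subset\overline{V_\beta}$, and the grouping via $\tau$ makes the subordination to the original cover $\{U_\alpha\}$ explicit --- but the underlying argument is the same.
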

\begin{proof}
From the cover $\{U_\alpha\}$ we take a open refinement locally finite $\{V_\beta\}$. As X is normal we can choose a open refinement $\{W_\gamma\}$ such that for every $\gamma$ there is a $\beta$ such that
\[\overline{W_\gamma}\subset V_\beta\]

By Urysohn' lemma, for every $\gamma$ there is a continuous map $f_\gamma:W_\gamma\to [0,1]$ such that
\[f_\gamma(x)=\begin{cases}1 & \text{if\:} x\in \overline{W_\gamma}\\ 0 &  \text{if\:} x\not\in V_\beta \end{cases}\]

Let us define the partition unity functions as:
\[p_\gamma(x)=\frac{f_\gamma(x)}{\sum_{\gamma}f_\gamma(x)}\]

Because $\{\overline{W_\gamma}\}$ is a cover for $X$, then there is at least one index $\gamma$ such that $f_\gamma(x)\neq 0$, and by the locally finite condition the sum in the denominator is well defined.
\end{proof}

\section{Bredon's trick}\label{truco}

We present in this section Bredon's trick, the demonstration of this result is based on the existence of a proper function, which allows the decomposition of the space into compact sets. In Bredon's proof this is included as a comment where he claims that it is possible to build this function, in this case we follow the presentation made by \cite{Mangel}. Let us recall that a continuous function is called proper if the inverse of a compact set is compact, the following result guarantee the existence of proper functions under certain conditions.

\begin{lemma}
Given a a topological space $X$ Hausdorff, second numerable and locally compact, then there is a continuous proper map $f:X\to [0,\infty)$
\end{lemma}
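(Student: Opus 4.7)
The plan is to construct $f$ from a compact exhaustion of $X$, that is, a nested sequence $K_1 \subset \operatorname{int}(K_2) \subset K_2 \subset \operatorname{int}(K_3) \subset \cdots$ of compact sets whose union is $X$. Once such an exhaustion is in hand, $f$ can be manufactured by gluing together Urysohn bump functions, so the real work is the exhaustion itself.

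First I would produce a countable basis $\{B_i\}_{i\in\mathbb{N}}$ of $X$ consisting of open sets with compact closure. Local compactness together with the Hausdorff property gives each $x\in X$ a neighborhood basis of open sets with compact closure; intersecting the given countable basis with this family and discarding those members whose closure is not compact leaves a countable basis with the desired property. From here the exhaustion is built recursively: set $K_1=\overline{B_1}$, and having chosen the compact set $K_n$, use that $K_n$ is covered by $\{B_i\}$ to extract a finite subcover, and define
\[
K_{n+1}\;=\;\overline{B_1\cup\cdots\cup B_{j_n}},
\]
where $j_n\ge n+1$ is large enough that $K_n\subset B_1\cup\cdots\cup B_{j_n}$. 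Each $K_{n+1}$ is a finite union of compact closures hence compact, contains $K_n$ in its interior, and $X=\bigcup_n K_n$ because $B_n\subset K_{n+1}$ for every $n$.

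Next, using that $X$ is locally compact Hausdorff and therefore normal (indeed a second countable, locally compact Hausdorff space is paracompact, hence normal), I would apply Urysohn's lemma to the disjoint closed set $K_n$ and the closed complement $X\setminus\operatorname{int}(K_{n+1})$ to obtain a continuous function $g_n:X\to[0,1]$ with $g_n\equiv 1$ on $K_n$ and $g_n\equiv 0$ off $\operatorname{int}(K_{n+1})$. Then define
\[
f(x)\;=\;\sum_{n=1}^{\infty}\bigl(1-g_n(x)\bigr).
\]
Continuity is a local matter: any $x\in X$ lies in some $\operatorname{int}(K_m)$, and on that open set $g_n\equiv 1$ for all $n\ge m$, so only finitely many terms of the sum are nonzero there, making $f$ a finite sum of continuous functions on an open neighborhood of $x$.

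Finally I would verify properness. A compact subset of $[0,\infty)$ is contained in some $[0,N]$, so it suffices to show $f^{-1}([0,N])$ is compact. If $x\notin K_{N+2}$ then $g_n(x)=0$ for all $n\le N+1$ (since $\operatorname{int}(K_{n+1})\subset K_{N+2}$ in that range), whence $f(x)\ge N+1>N$. Consequently $f^{-1}([0,N])$ is a closed subset of the compact set $K_{N+2}$, hence compact. The main obstacle is really the first step, the construction of the compact exhaustion from the hypotheses of second countability and local compactness; the rest is a routine Urysohn assembly.
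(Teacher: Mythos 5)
Your proof is correct, but it follows a genuinely different route from the paper. The paper takes a locally finite countable open cover $\{U_n\}$ (implicitly by relatively compact sets) with a subordinate partition of unity $\{p_n\}$ and defines $f=\sum_n n\,p_n$, deducing properness from the inclusion $f^{-1}[0,n]\subset U_1\cup\cdots\cup U_n$; this is short and fits naturally after the paper's partition-of-unity proposition, though it leaves implicit both the countability/relative compactness of the cover and the final step that a closed set inside a finite union of relatively compact opens is compact. You instead build a compact exhaustion $K_1\subset\operatorname{int}(K_2)\subset K_2\subset\cdots$ from a countable basis of relatively compact open sets and sum the Urysohn functions $1-g_n$, getting properness from $f^{-1}([0,N])\subset K_{N+2}$. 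Your construction avoids partitions of unity (at the cost of invoking normality for Urysohn, which your paracompactness remark justifies) and makes explicit exactly where second countability and local compactness enter, as well as where the compactness needed for properness comes from; all the steps you outline (the relatively compact countable basis, the recursion with $j_n\ge n+1$, local finiteness of the sum on $\operatorname{int}(K_m)$, and the bound $f\ge N+1$ off $K_{N+2}$) check out. Both arguments are standard and of comparable length; yours is the more self-contained of the two, the paper's the slicker once a partition of unity is in hand.
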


\begin{proof}
Let $\{U_n\}$ an open locally finite cover and $\{p_n\}$ a partition of  unity subordinate to this cover. We define $f:X\to [0,\infty)$ by
\[f(x)=\sum_n np_n(x)\]

Clearly $f(x)$ is continuous, let us see that is a proper map, for this is enough to prove 
\[f^{-1}[0, n]\subset \bigcup_{j=1}^n U_j\]

Let us suppose that there is $x\in X$ such that $0\leq f(x)\leq n$ and $x\not\in \bigcup_{j=1}^n U_j$. Let $\{U_{j_1},\cdots,U_{j_k}\}$ be the elements in the cover $\{U_n\}$ that have $x$ as element, we can assume that $j_1<\cdots <j_k$, then we have $n<j_1$ but as $\sum_{n}p_n(x)=p_{j_1}(x)+\cdots +p_{j_k}(x)=1$, then:
\[j_1=j_1(p_{j_1}(x)+\cdots +p_{j_k}(x))=\sum_{i=1}^n j_1p_{j_i}(x)\]
As $j_1<\cdots <j_k$ we have
\[j_1=\sum_{i=1}^n j_1p_{j_i}(x)\leq \sum_{i=1}^n j_ip_i(x)=\sum_{n} np_n(x)=f(x) < n\]

And this is a contradiction.
\end{proof}

\begin{lemma}
Let $X$ be a para-compact topological space and let $\{U_\alpha\}$ be an open covering, closed for finite intersection. Suppose that $p(U)$ is a statement about open subsets of $X$, satisfying the following three properties:
\begin{enumerate}
\item $p(U_\alpha)$ is true for each $\alpha$;
\item if $p(U)$, $p(V)$ and $p(U\cap V)$ is true, then $p(U\cup V)$ is also true, where $U$ and $V$ are open subsets of $X$;
\item let $\{U_i\}$ be a disjoint family of open subsets of $X$, if $p(U_i)$ is true then $p(\cup_i U_i)$ is also true.
\end{enumerate}
Then $p(X)$ is true.
\end{lemma}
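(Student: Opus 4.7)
The plan is to exhaust $X$ by compact slabs coming from a proper function and then feed those slabs into the three closure properties. Before invoking the proper function I would first upgrade property (1) from single cover elements to arbitrary finite unions $U_{\alpha_1}\cup\cdots\cup U_{\alpha_n}$. A straightforward induction does this: writing the union as $V\cup U_{\alpha_n}$ with $V=U_{\alpha_1}\cup\cdots\cup U_{\alpha_{n-1}}$, property (2) applies because $V\cap U_{\alpha_n}=\bigcup_{i<n}(U_{\alpha_i}\cap U_{\alpha_n})$ is again a finite union of cover elements, the intersections $U_{\alpha_i}\cap U_{\alpha_n}$ lying in the cover by the closure-under-intersection hypothesis.

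Next, I would use the preceding lemma to pick a proper continuous $f:X\to[0,\infty)$ and set $K_n=f^{-1}([n,n+1])$, which is compact, together with $W_n=f^{-1}((n-1/3,n+4/3))$, an open strip around $K_n$ enjoying the key separation property $W_n\cap W_m=\emptyset$ whenever $|n-m|\ge 2$. Using compactness of $K_n$ I would select finitely many cover elements $U_{\alpha_i^n}\subseteq W_n$ whose union $V_n$ covers $K_n$; by the previous step each $p(V_n)$ is true. Since $V_n\subseteq W_n$, the sub-families $\{V_n:n\text{ even}\}$ and $\{V_n:n\text{ odd}\}$ are each pairwise disjoint, so property (3) yields $p(A)$ and $p(B)$ for the disjoint unions $A=\bigcup_{n\text{ even}}V_n$ and $B=\bigcup_{n\text{ odd}}V_n$.

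Now $A\cup B=X$ because $\bigcup_n K_n = X$. The intersection decomposes as the pairwise disjoint union $A\cap B=\bigcup_{n}(V_n\cap V_{n+1})$, the disjointness across different indices following from $V_n\cap V_{n+2}=\emptyset$. Each piece $V_n\cap V_{n+1}$ is a finite union of pairwise intersections $U_{\alpha_i^n}\cap U_{\alpha_j^{n+1}}$, hence again a finite union of cover elements, so the first step together with property (3) gives $p(A\cap B)$. Property (2) applied to $A$, $B$, $A\cap B$ then delivers $p(A\cup B)=p(X)$.

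The main obstacle I foresee lies in the second paragraph: I need each point of $K_n$ to lie in some cover element $U_\alpha$ contained in the strip $W_n$, otherwise the finite subcovers extracted by compactness may leak outside $W_n$ and destroy the disjointness on which the applications of property (3) rest. This step relies on the cover $\{U_\alpha\}$ being sufficiently fine, which in the geometric applications (for instance when the $U_\alpha$ are chart or trivialization domains forming a basis) is automatic, but in the general paracompact setting must be arranged by a preliminary shrinking using a locally finite refinement and a partition of unity.
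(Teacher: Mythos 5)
Your outline reproduces, step for step, the paper's own proof: the induction that upgrades property (1) to finite unions of cover elements, then a proper function $f$, the compact slabs $f^{-1}([n,n+1])$, finite unions $V_n$ of cover elements squeezed into overlapping strips, the even/odd disjoint unions handled by property (3), and a final application of property (2) to $A$, $B$, $A\cap B$. The only deviations (strip width $4/3$ instead of $3/2$) are immaterial, and your treatment of $A\cap B$ is in fact slightly more careful than the paper's.

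The obstacle you flag at the end, however, is a genuine gap --- one the paper's proof silently shares --- and your proposed repair does not close it. Nothing in the hypotheses guarantees that a point of $K_n$ lies in a cover element contained in the strip $W_n$: the $U_\alpha$ are only assumed to form an open cover closed under finite intersections, not a basis. A preliminary shrinking by a locally finite refinement or a partition of unity cannot help, because property (1) is assumed only for the original sets $U_\alpha$; you know nothing about $p$ on smaller open sets. Indeed, as stated the lemma is false: on $X=\R$ take the cover $U_n=(-n,n)$, $n\ge 1$, which is closed under finite intersections, and let $p(U)$ be the statement ``$U$ is empty or equal to some $(-n,n)$''. Conditions (1)--(3) hold (any disjoint family of such intervals has at most one nonempty member, and the union of two of them is again one of them), yet $p(\R)$ fails. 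The hypothesis that makes your second paragraph work --- and the one Bredon actually uses, with convex chart neighbourhoods --- is that $\{U_\alpha\}$ is a \emph{basis} of the topology closed under finite intersections; then each point of $K_n$ has a cover element around it inside $W_n$, and compactness of $K_n$ gives the finite union $V_n\subset W_n$ exactly as you want. Note that the paper's applications do take bases (geodesically convex sets in charts), so the strengthened hypothesis costs nothing there. A smaller point of the same kind: a proper $f:X\to[0,\infty)$ is supplied by the preceding lemma under the hypotheses Hausdorff, second countable and locally compact, not by bare paracompactness, so that assumption should be carried along as well.
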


\begin{proof}
The proof is performed into two stages, first it is proved that P is true for open finite unions by induction in the number of open sets, suppose that $P(U_i)$ is true for each $i$:
\begin{itemize}
\item $P(U_1\cup U_2)$ is condition 2).

\item If $P(U_1\cup U_2\cup\cdots\cup U_n)$ is true, from the identity 
\[(U_1\cup U_2\cup\cdots\cup U_n)\cap U_{n+1}=(U_1\cap U_{n+1})\cup (U_2\cap U_{n+1})\cup\cdots\cup (U_n\cap U_{n+1})\]
we obtain an union of $n$ open sets, then $P((U_1\cup U_2\cup\cdots\cup U_n)\cap U_{n+1})$ is true and again by condition 2) we have that $P((U_1\cup U_2\cup\cdots\cup U_n)\cup U_{n+1})$ is true.
\end{itemize}

In the second part we choose a proper function $f:M\to [0,\infty)$ and define sets $A_n=f^{-1}[n,n+1]$. Since $f$ is proper, $A_n$ is compact, so we can cover it with a finite union of sets $U_n$ contained in $f^{-1}(n-\frac{1}{2},n+\frac{3}{2})$, so we have
\[A_n\subset U_n\subset f^{-1}(n-\frac{1}{2},n+\frac{3}{2})\]

From here we get that $A_n$ with $n$ even are disjointed from each other, and if $n$ is odd, they are also disjointed from each other, besides as $A_n$ is the finite union of open $U_n$, for the first part, we have that $P(A_n)$ is true for all $n$.

\smallskip

Finally we write to $M$ as a union of two open sets:
\[U=\bigcup_{k\geq o}A_{2k}\: \text{and}\: V=\bigcup_{k\geq o}A_{2k+1}\]

Condition 3) guarantees that $P(U)$ and $P(V)$ are true, and since
\[U\cap V=\bigcup_{i,j}(A_{2i}\cap A_{2j+1})\]
is a union of open disjoints, and each $(A_{2i}\cap A_{2j+1})$ is a finite union, then $P((A_{2i}\cap A_{2j+1}))$ is true and then $P(U\cap V)$ is true, again applying condition 2) establishes that $P(U\cup V)=P(M)$ is true.
\end{proof}

\section{Application to Cohomology of Manifolds}\label{ejemplos}

We present now the application of Bredon's trick in obtaining results for De Rham's cohomology for differential manifolds, we seek with this to illustrate the power that this result has to prove global results from the local study on space.

\smallskip

Given a differentiable manifold $M$ we know that the differential forms induce a complex of cochains, whose homology is known as the De Rham's cohomology, it is also possible to define over $M$ the simplicial cochains complex that produces the singular cohomology of  $M$ . The equivalence of both cohomologies was established by De Rham in his thesis, and it uses tools of differential form calculus. Originally the introduction of Bredon's trick was made as an auxiliary lemma to give an ingenious demonstration of De Rham's theorem \cite{Bredon}, this same approach was presented in \cite{Marco} were some clarifications were made.

\begin{theorem}[(De Rham theorem)]
Let $M$ be a differentiable manifold, and let us denote by $H_{DR}(M)$ the De Rham cohomology and by $H_\sigma(M)$ the singular cohomologym then
\[H_{DR}^k(M)\cong H_\sigma^k(M).\]
\end{theorem}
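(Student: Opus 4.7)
The plan is to apply Bredon's trick to the statement $P(U)$: ``the natural comparison map $\Phi_U^k:H_{DR}^k(U)\to H_\sigma^k(U)$, induced by integration of differential forms over smooth singular simplices (so that $\omega\mapsto\bigl(\sigma\mapsto \int_\sigma\omega\bigr)$ is a cochain map by Stokes' theorem), is an isomorphism for every $k\geq 0$.'' Both sides are contravariant functors on open subsets of $M$ and $\Phi$ is a natural transformation, so $P(U)$ is a well-posed statement for every open $U\subset M$.

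First I would fix the base cover. Endow $M$ with a Riemannian metric and let $\{U_\alpha\}$ be the collection of geodesically convex normal neighbourhoods; this is an open cover of $M$, and a finite intersection of geodesically convex sets is again geodesically convex, so the cover is closed under finite intersections as required by the lemma. Each $U_\alpha$ is diffeomorphic to a convex open subset of $\R^n$, hence smoothly contractible. On such a $U_\alpha$ the Poincaré lemma yields $H_{DR}^k(U_\alpha)=0$ for $k>0$ and $\R$ for $k=0$; the same computation holds for singular cohomology, and $\Phi_{U_\alpha}^0$ sends the constant function $1$ to the $0$-cochain with value $1$ on every point, which generates $H_\sigma^0(U_\alpha)$. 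This establishes hypothesis $(1)$.

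For hypothesis $(2)$, both theories admit natural Mayer--Vietoris long exact sequences associated to a pair $\{U,V\}$ of open sets: on the de Rham side this comes from the short exact sequence $0\to\Omega^\bullet(U\cup V)\to\Omega^\bullet(U)\oplus\Omega^\bullet(V)\to\Omega^\bullet(U\cap V)\to 0$, and on the singular side from the subcomplex of $\{U,V\}$-small cochains, which computes the same cohomology by barycentric subdivision. The integration map $\Phi$ intertwines these two short exact sequences, so it induces a ladder between the associated long exact sequences; if $\Phi$ is an isomorphism for $U$, $V$ and $U\cap V$ in every degree, the Five Lemma gives that $\Phi_{U\cup V}$ is an isomorphism too. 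Hypothesis $(3)$ is formal: both $H_{DR}$ and $H_\sigma$ convert disjoint unions of open sets into products of graded vector spaces (for singular cohomology because each singular simplex has connected image, hence lies in exactly one component), and $\Phi$ respects this product decomposition, so isomorphisms on each piece assemble to an isomorphism on the union. With the three hypotheses verified, Bredon's trick yields $P(M)$, which is exactly $H_{DR}^k(M)\cong H_\sigma^k(M)$.

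The main obstacle, in my view, is hypothesis $(2)$: the construction of Mayer--Vietoris sequences that are genuinely natural \emph{with respect to $\Phi$}, i.e.\ whose connecting homomorphisms commute with integration up to sign. Unpacking this requires that integration be defined on \emph{smooth} singular cochains and that the inclusion of smooth into continuous singular cochains induce an isomorphism on cohomology (a Whitney-approximation argument). Once this identification is in place, the commutation of $\Phi$ with the Mayer--Vietoris coboundary is a direct diagram chase, and the remainder of the argument is the clean three-step verification above.
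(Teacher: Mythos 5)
Your proposal follows essentially the same route as the paper: Bredon's trick applied to $P(U)$ with a basis of geodesically convex sets closed under finite intersection, the Poincar\'e lemma for condition (1), Mayer--Vietoris plus the Five Lemma for condition (2), and the disjoint-union decomposition for condition (3). You are in fact slightly more careful than the paper on two points it leaves implicit --- phrasing $P(U)$ in terms of the natural integration map rather than an abstract isomorphism (which is what makes the Five Lemma step legitimate) and flagging the smooth-versus-continuous singular cochain comparison --- so the argument stands as written.
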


\begin{proof}
The proof it is based on the Bredon's trick, for each open set $U\subset M$ we consider the property:
\[P(U):=\: "H_{DR}^k(U)\cong H_\sigma^k(U)"\]

We can choose a basis $\cal{B}$ of geodesic convex contained in the charts of $M$ which is closed by finite intersections. Then $P(U)$ is true for every $U\in \cal{B}$ by the Poincar\'e lemma, then we have condition 1).

\smallskip

Let us assume that $P( U),\, P(V)$ and $P(U\cap V)$ are true for open $U,\, V$
as both De Rham and singular cohomology sastify Mayer-Vietoris, we have the commutative diagram

\[\scalebox{0.8}[0.9]{
\xymatrix{H_{DR}^{k-1}(U)\oplus H_{DR}^{k-1}(V)\ar[r] \ar[d]^{\cong} &H_{DR}^{k-1}(U\cap V)\ar[r] \ar[d]^{\cong} & H_{DR}^{k}(U\cup V)\ar[r]\ar[d]&H_{DR}^{k}(U)\oplus H_{DR}^{k}(V)\ar[r] \ar[d]^{\cong} &H_{DR}^{k}(U\cap V)\ar[r] \ar[d]^{\cong} &\\
H_\sigma^{k-1}(U)\oplus H_\sigma^{k-1}(V) \ar[r]  & H_\sigma^{k-1}(U\cap V)\ar[r] &H_\sigma^{k}(U\cup V) \ar[r] &H_\sigma^{k}(U)\oplus H_\sigma^{k}(V) \ar[r]  &H_\sigma^{k}(U\cap V)\ar[r] &}}
\]

By the five lema, we have that $P(U\cup V)$ is true, this proves the second condition.

\smallskip

Let us suppose now that we have a open disjoint collection $\{U_\alpha\}_{\alpha\in\Lambda}$ such that $P(U_\alpha)$ is true for each $\alpha$, then the third condition of Bredon's trick follows from the fact that
\[Hom\left(\bigoplus_{\alpha\in\Lambda}\tilde{S}_q(U_\alpha),\R\right)\cong \bigoplus_{\alpha\in\Lambda}Hom(\tilde{S}_q(U_\alpha),\R)\]

\[\Omega^k\left(\bigcup_{\alpha\in\Lambda}U_\alpha\right)\cong\bigoplus_{\alpha\in\Lambda}\Omega^k(U_\alpha)\]

Where $\tilde{S}_q(U_\alpha)$ denotes the set of differentiable $q$-simplex on $U_\alpha$, then  we have:
\[H_{DR}^k(\bigcup_{\alpha\in\Lambda}U_\alpha)\cong \bigoplus_{\alpha\in\Lambda}H_{DR}^k(U_\alpha)\cong \bigoplus_{\alpha\in\Lambda}H_{\sigma}^k (U_\alpha)\cong H_{\sigma}^k (\bigcup_{\alpha\in\Lambda}U_\alpha)\]

\end{proof}

Recall that given a Lie group $G$ acting on a manifold $M$, for every $g\in G$ we have the map $g:M\to M$ given by $g(m)=g.m$ this map induce a map on the differential forms space
\[g^*:\Omega^k(M)\to\Omega^k(M)\]

\begin{definition}
We say that a $k$-form $\omega\in \Omega^k(M)$ is invariant if $g^*(\omega)=\omega$. We denote by $I\Omega^k(M)$ the space of invariant $k$-forms and by $IH^k(M)$ the corresponding cohomology groups.
\end{definition}

The following is a classical result that related the invariant cohomology groups with the De Rham cohomology, in this case we use the Bredon's trick to prove it when the groups $G=S^1$ acts freely, this appoach is discussed in \cite{Expedito}.

\begin{theorem}\label{Exp}
Let $M$ be a $S^1$-manifold with $S^1$ acting freely, then for every $k$, the inclusion map
1\[i : I\Omega^k(M)\to\Omega^k(M)\]
induce an isomorphism on cohomology.
\end{theorem}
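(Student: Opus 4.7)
The plan is to apply Bredon's trick, restricted to $S^1$-invariant open subsets of $M$, to the property
\[P(U):=\text{``$i:I\Omega^*(U)\hookrightarrow\Omega^*(U)$ induces an isomorphism $IH^*(U)\cong H^*_{DR}(U)$''}.\]
Since $S^1$ is compact, averaging over the group turns any proper function into an invariant proper function and any partition of unity into an invariant one. With this, the constructions used in the proof of Bredon's trick in Section~\ref{truco} (the sets $A_n$, their finite subcovers by basis elements, and the alternating decomposition of $M$) all stay inside the class of $S^1$-invariant opens, so it suffices to verify the three hypotheses of Bredon's trick on that class.

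For condition~(1), I would take as basis $\mathcal{B}$ the invariant opens of the form $U\cong V\times S^1$, where $V$ is a geodesically convex chart of the base $M/S^1$. Such local trivialisations exist because the free $S^1$-action makes $\pi_{S^1}:M\to M/S^1$ a principal $S^1$-bundle, and a finite intersection of two of them is of the same type, so $\mathcal{B}$ is closed under finite intersection. On $V\times S^1$, contracting with the fundamental vector field of the action yields the canonical splitting
\[I\Omega^k(V\times S^1)\;\cong\;\pi^*\Omega^k(V)\,\oplus\,d\theta\wedge\pi^*\Omega^{k-1}(V),\]
so that $I\Omega^*(V\times S^1)\cong\Omega^*(V)\otimes I\Omega^*(S^1)$ as complexes. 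Taking cohomology and invoking Künneth yields $IH^*(V\times S^1)\cong H^*_{DR}(V)\otimes H^*_{DR}(S^1)\cong H^*_{DR}(V\times S^1)$, and this identification is exactly the map induced by $i$. This settles the base case.

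For condition~(2), an invariant partition of unity subordinate to an invariant cover $\{U,V\}$ produces a short exact Mayer-Vietoris sequence
\[0\to I\Omega^*(U\cup V)\to I\Omega^*(U)\oplus I\Omega^*(V)\to I\Omega^*(U\cap V)\to 0,\]
whose long exact sequence maps into the De Rham Mayer-Vietoris sequence via $i$; assuming $P(U),\,P(V),\,P(U\cap V)$, the five-lemma gives $P(U\cup V)$. Condition~(3) is immediate, because both functors $I\Omega^*$ and $\Omega^*$ split as products over a disjoint family and cohomology commutes with such products. Bredon's trick then yields $P(M)$.

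\textbf{Main obstacle.} The substantive work is the local computation on $V\times S^1$. One has to justify the horizontal/vertical decomposition of invariant forms (writing $\omega=\pi^*\alpha+d\theta\wedge\pi^*\beta$, where $\beta$ is read off from $\iota_{\partial_{\theta}}\omega$) and then verify that the resulting map in cohomology really is the Künneth isomorphism, so that $i$ is a local quasi-isomorphism. A minor but necessary point is to record explicitly that Bredon's trick, stated in Section~\ref{truco} for arbitrary opens, applies verbatim to the sub-poset of $S^1$-invariant opens once an invariant proper function is chosen.
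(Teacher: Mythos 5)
Your proposal is correct and follows essentially the same route as the paper: Bredon's trick applied to the property that $I\Omega^*\hookrightarrow\Omega^*$ is a quasi-isomorphism, with the base case reduced via local triviality of the free action to $IH^*(S^1)\cong H^*_{DR}(S^1)$, condition (2) by Mayer--Vietoris plus the five lemma, and condition (3) by splitting over disjoint unions. The only notable difference is bookkeeping: the paper states the property for open sets $U\subset M/S^1$ (about $\pi^{-1}(U)$) and applies the trick verbatim to the quotient manifold, which avoids the extra step you flag of averaging to an invariant proper function and re-running the trick on the sub-poset of $S^1$-invariant opens of $M$; your explicit splitting $I\Omega^*(V\times S^1)\cong\Omega^*(V)\otimes I\Omega^*(S^1)$ is a more detailed justification of the local computation the paper attributes to the Poincar\'e lemma.
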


\begin{proof}
The proof it is based on the Bredon's trick, for each open set $U\subset M/S^{1}$ we consider the property:
\[P(U):=\: "I\Omega^k(\pi^{-1}(U))\to\Omega^k(\pi^{-1}(U))\: \text{induces isomorphism in cohomology}"\]

Because $S^1$ acts freely on $M$, the quotient $M/S^1$ is a manifold, so we can choose a cover $\{U_\alpha\}$ closed by finite intersections, and we have $\pi^{-1}(U_\alpha)=S^1\times U$ for a open set $U\sim \R^n$
By Poincaré lemma, we have
\[\Omega^k(S^1\times U)\cong \Omega^k(S^1)\]
and
\[I\Omega^k(S^1\times U)\cong I\Omega^k(S^1)\]

Then it is enough to proof that $I\Omega^k(S^1)\to \Omega^k(S^1)$ induces isomorphism in cohomology, but 
\[IH^k(S^1)=IH^k(S^1)=\begin{cases} \R & \text{if}\, k=0, 1.\\ 0 & \text{otherwise}\end{cases}\]

Then $P(U_\alpha)$ is true for every $U_\alpha$.

\smallskip

Let us assume that for open sets $U,\, V$ we have thar $P(U), P(V)$ and $P(U\cap V)$ is true, then using the Mayer-Vietoris sequence we have the following diagram:
\[\scalebox{0.85}{
\xymatrix{\ar[r]&IH^{k-1}(\pi^{-1}(U))\oplus IH^{k-1}(\pi^{-1}(V))\ar[r] \ar[d]^{\cong} & IH^{k-1}(\pi^{-1}(U\cap V))\ar[r] \ar[d]^{\cong} & IH^{k}(\pi^{-1}(U\cup V))\ar[r]\ar[d]&\\
\ar[r]&H^{k-1}(\pi^{-1}(U))\oplus H^{k-1}(\pi^{-1}(V)) \ar[r]  & H^{k-1}(\pi^{-1}(U\cap V))\ar[r] & H^{k}(\pi^{-1}(U\cup V)) \ar[r] &}}
\]

\[\scalebox{0.85}{
\xymatrix{\ar[r]&IH^{k}(\pi^{-1}(U))\oplus IH^{k}(\pi^{-1}(V))\ar[r] \ar[d]^{\cong} & IH^{k}(\pi^{-1}(U\cap V))\ar[r] \ar[d]^{\cong} & \\
\ar[r]&H^{k}(\pi^{-1}(U))\oplus H^{k}(\pi^{-1}(V)) \ar[r]  & H^{k}(\pi^{-1}(U\cap V))\ar[r] & }}
\]

By the five lemma, we conclude that $IH^{k}(\pi^{-1}(U\cup V))\cong H^{k}(\pi^{-1}(U\cup V))$, then $P(U\cup V)$ is true.

Let $\{U_\alpha\}$ a disjoint family such that for each $\alpha$ the $P(U_\alpha)$ is true, then:
\[IH^k(\pi^{-1}\left(\cup_\alpha U_\alpha\right))=IH^k\left(\cup_\alpha \pi^{-1}(U_\alpha)\right)=\oplus IH^k(\pi^{-1}(U_\alpha))\]
As $P(U_\alpha)$ is true, we have that $IH^k(\pi^{-1}(U_\alpha))\cong H^k(\pi^{-1}(U_\alpha))$ then

\[\oplus IH^k(\pi^{-1}(U_\alpha))\cong \oplus H^k(\pi^{-1}(U_\alpha))=H^k(\pi^{-1}\left(\cup_\alpha U_\alpha\right))\]

Then $P(\cup_\alpha U_\alpha)$ is true, and by Bredon's trick $P(M/S^1)$ is true.

\end{proof}

Another classic result in the context of De Rham's cohomology is the K\"unneth formula, which relates the cohomology of two manifolds $M$ and $F$ and the cohomology of the product manifold $M\times F$. According to the hypothesis that we have about $M$, $F$ and their cohomology groups the formula is written in different ways, in Bott-Tu's book \cite{Bott} this formula is written in a quite compact way, imposing conditions on the cohomology group of  $F$, here we follow the proof provided in \cite{Mangel}.

\begin{theorem}[(K\"unneth formula)]
Let $M$ and $F$ be two manifolds, such that $F$ has finite dimensional cohomology, then we have
\[H^*(M\times F)=H^*(M)\otimes H^*(F)\]
\end{theorem}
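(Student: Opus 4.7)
The plan is to apply Bredon's trick on $M$ with the property
\[P(U):=\:\text{``the cross product}\:\: H^*(U)\otimes H^*(F)\to H^*(U\times F)\:\:\text{is an isomorphism''},\]
where the cross product sends $\alpha\otimes\beta$ to $\pi_M^*\alpha\wedge\pi_F^*\beta$ via the projections of $U\times F$. As in the De Rham theorem I first choose a basis $\mathcal{B}$ of $M$ made of geodesically convex open sets and closed under finite intersections, so that $\mathcal{B}$ is an admissible cover for the trick.

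For condition $(1)$, each $U\in\mathcal{B}$ is contractible, so $H^*(U)=\R$ concentrated in degree zero and the projection $U\times F\to F$ is a homotopy equivalence. Under the identification $H^*(U)\otimes H^*(F)\cong H^*(F)$ the cross product reduces to $\pi_F^*$, which is an isomorphism, so $P(U)$ holds. For condition $(2)$, assume $P(U)$, $P(V)$ and $P(U\cap V)$. I compare two Mayer-Vietoris long exact sequences: the bottom row is for the cover $\{U\times F,\, V\times F\}$ of $(U\cup V)\times F$, and the top row is the Mayer-Vietoris for $\{U,V\}$ on $M$ tensored with $H^*(F)$. Over the field $\R$, the functor $-\otimes H^*(F)$ is exact, so the top row is exact; the vertical maps are cross products and are natural with respect to restrictions and, up to a standard sign, with respect to the connecting homomorphism. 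In each five-term window four of the five vertical maps are isomorphisms by hypothesis, so the five lemma yields $P(U\cup V)$.

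For condition $(3)$, if $\{U_\alpha\}$ is a disjoint family, then, just as in the paper's proof of De Rham's theorem,
\[H^*\!\left(\bigcup_\alpha U_\alpha\times F\right)\cong\bigoplus_\alpha H^*(U_\alpha\times F)\cong\bigoplus_\alpha\bigl(H^*(U_\alpha)\otimes H^*(F)\bigr)\cong\left(\bigoplus_\alpha H^*(U_\alpha)\right)\otimes H^*(F),\]
where the middle step uses $P(U_\alpha)$ and the last step uses that tensor product distributes over direct sums. Hence $P(\bigcup_\alpha U_\alpha)$ holds, and Bredon's trick then delivers $P(M)$, which is the K\"unneth formula.

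The main obstacle I expect is not the recursive skeleton supplied by Bredon's trick, but the naturality of the cross product for the Mayer-Vietoris ladder in step $(2)$: one must check carefully that it commutes with the connecting homomorphism $\delta$ up to the usual sign, so that the diagram of long exact sequences commutes cell by cell. The assumption that $H^*(F)$ is finite-dimensional enters precisely here, allowing one to identify $H^*(\Omega^*(U)\otimes\Omega^*(F))$ with $H^*(U)\otimes H^*(F)$ without having to pass to a completed tensor product, so that the algebraic K\"unneth step works without extra decoration at each stage of the induction. Granting these two points, the rest is a mechanical application of the five lemma together with the three conditions of Bredon's trick.
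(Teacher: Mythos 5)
Your plan is sound, but it is not the paper's proof: the paper fixes $M$ and runs Bredon's trick over open subsets $U\subset F$, with $P(U):=$ ``$H^*(M\times U)\cong H^*(M)\otimes H^*(U)$'', whereas you fix $F$ and run the trick over open subsets of $M$, using the cross product $H^*(U)\otimes H^*(F)\to H^*(U\times F)$. Your orientation is actually the one in which the hypothesis that $H^*(F)$ is finite dimensional can do its work, and your conditions $(1)$ and $(2)$ are fine as stated: contractibility plus homotopy invariance for $(1)$, and for $(2)$ exactness of $-\otimes H^*(F)$ over the field $\R$ (every vector space is flat, so no finiteness is needed there), naturality of the cross product up to sign in the connecting homomorphism (which is harmless for the five lemma).

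The genuine flaw is in condition $(3)$ and in where you locate the use of the finiteness hypothesis. For de Rham cohomology a disjoint union gives a direct \emph{product}, not a direct sum: $\Omega^*\bigl(\bigsqcup_\alpha X_\alpha\bigr)=\prod_\alpha\Omega^*(X_\alpha)$, hence $H^*\bigl(\bigsqcup_\alpha U_\alpha\times F\bigr)\cong\prod_\alpha H^*(U_\alpha\times F)$. Written with direct sums, your step $(3)$ closes by ``tensor distributes over sums'' and the hypothesis on $H^*(F)$ is never used anywhere in your argument; instead you attribute it to an identification of $H^*(\Omega^*(U)\otimes\Omega^*(F))$ with $H^*(U)\otimes H^*(F)$ in step $(2)$, which is not where it matters (the cross product is defined directly by $\alpha\otimes\beta\mapsto\pi_M^*\alpha\wedge\pi_F^*\beta$, and flatness over $\R$ handles the ladder). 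The correct point is that the natural map $\bigl(\prod_\alpha H^*(U_\alpha)\bigr)\otimes H^*(F)\to\prod_\alpha\bigl(H^*(U_\alpha)\otimes H^*(F)\bigr)$ is an isomorphism \emph{because} $H^*(F)$ is finite dimensional; it is only injective in general, and the theorem itself fails without some finiteness (take both factors to be an infinite disjoint union of circles). With sums replaced by products and this remark inserted, your step $(3)$ and the whole proof are correct. It is worth noting that the paper's own sketch, which keeps the possibly infinite-dimensional $H^*(M)$ as the fixed tensor factor and also writes direct sums in its condition $(3)$, glosses over exactly this point, so your choice of which factor to induct over is the preferable one.
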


\begin{proof}
The proof it is based on the Bredon's trick, we fix the manifold $M$ and for each open set $U\subset F$ we consider the property:
\[P(U):=\: "H^*(M\times U)=H^*(M)\otimes H^*(U)"\]
Following as in theorem \ref{Exp} the idea is to use the fact that locally a manifold is $\R^n$, Poincar\'e lemma for the first condition, Mayer-Vietoris and the five lema for the second condition and for the third condition:
\[H^k(M\times \left(\cup_\alpha U_\alpha\right))=\oplus_\alpha H^k(M\times U_\alpha)\cong \oplus_\alpha H^k(M)\otimes H^k(U_\alpha)=H^k(M)\otimes H^k(\cup_\alpha U_\alpha)\]
\end{proof}

There are some others results where the use of the Bredon's trick is fundamental to be proven, a nice example can be found in Royo's thesis \cite{Royo} where adaptation for regular Riemannian flows are made, obtaining interesting results in this context of Gysin sequences. Another topic where Bredon's trick is particularly useful is in the theory of stratified spaces, the rest of the paper is devoted to show a couple of results.

\smallskip

Recall that a stratification for a paracompact space $X$ is a partition into disjonit manifolds $\{S_\alpha\}$   which is locally finite and we have
\[S_\alpha\cap\overline{S_\beta}\neq\emptyset\Leftrightarrow S_\alpha\subset \overline{S_\beta}\]

$S_\alpha$ is called strata of $X$ and we denote by $\Sigma_X$ the collection of all singular (non open) strata.

\begin{definition}
We say that a stratified space $X$  is a stratified pseudomanifold if locally each singular strata looks like a cone, that is, for each $S\in \Sigma_X$ and $x\in S$ there is an isomorphism $\varphi: U_x\to \R^n\times cL_S$ where:
\begin{itemize}
\item $U_x$ is a stratified open neighborhood of $x$.
\item $L_S$ is a compact stratified space, called the Link of $S$.
\item $\R^n\times cL_S$ S is endowed with the stratification
\[\{\R^n \times \{\nu\}\} \cup \{\R^n \times S_\alpha \times ]0, 1[ | S_\alpha \in L_S\}\]
\item $\varphi(x)=(0, \nu)$
\end{itemize}
\end{definition}

For a stratified pseudomanifold $X$. An unfolding of $X$ consists of a manifold $\tilde{X}$, a family of unfoldings of the links $\{\tilde{ L}\to^{L} L\}$ and a continuous surjective proper map $L:\tilde{X}\to X$ satisfyin a series of conditions that we will not discuss here, but they have to do with good behavior in regular strata and the conical structure of the singular strata. For a stratified pseudomanifold $X$ and an unfolding $(\tilde{X},L)$ a differential form $\omega\in \Omega(X-\Sigma)$ is called a Verona form if the following conditions holds
\begin{enumerate}
\item There is a form $\tilde{\omega}\in\Omega(\tilde{X})$ such that $\tilde{\omega}|_{\tilde{X}-\partial\tilde{X}}=L^{*}_X(\omega)$
\item There is a form $\omega_\Sigma\in \Omega(\Sigma)$ such that  $\tilde{\omega}|_{\partial\tilde{X}}=L^{*}_X(\omega_\Sigma)$
\end{enumerate}

We denote by $\Omega_v(M)$ the family of Verona forms and by $H_v(M)$ the corresponding cohomology. In \cite{Royo2} the De Rham theorem is proved for Verona forms:

\begin{theorem}
Suppose that $M$ is the unfolding for a stratified pseudomanifold $X$, then the cohomology for the Verona forms and the De Rham cohomology of $M$ are isomorphic.
\end{theorem}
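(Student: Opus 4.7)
The plan is to apply Bredon's trick on the base pseudomanifold $X$ with the natural map $L_X^*: \Omega_v(X) \to \Omega(\tilde{X})$ in mind. For each open set $U \subset X$ set
\[
P(U) := \text{``the canonical map } H_v(U) \to H_{DR}(L^{-1}(U)) \text{ is an isomorphism.''}
\]
Since $X$ is paracompact and the unfolding $L:\tilde{X}\to X$ is proper and continuous, the inverse image of an open cover of $X$ closed under finite intersection yields an open cover of $\tilde{X}$ with the same property, so Bredon's trick is available on $X$ and both sides of the comparison map behave functorially with respect to the covers.

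The first step is to choose a distinguished cover of $X$ closed under finite intersection. On regular strata we take the usual geodesic-convex basis inside charts diffeomorphic to $\R^n$, where the unfolding restricts to a diffeomorphism and $P(U)$ reduces to the classical Poincaré lemma. For a point $x$ in a singular stratum $S$, the definition of a stratified pseudomanifold provides a neighborhood $U_x \cong \R^n \times cL_S$; its preimage under $L$ is the corresponding local unfolding, which by the compatibility axioms of an unfolding has the form $\R^n \times \tilde{cL_S}$ where $\tilde{cL_S}$ is a manifold with boundary built from the unfolding of the link $L_S$. I would argue that both $H_v(\R^n \times cL_S)$ and $H_{DR}(\R^n \times \tilde{cL_S})$ are concentrated in degree zero (a homotopy/retraction argument toward the cone point and its unfolded analogue), establishing the base case $P(U_\alpha)$.

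Conditions (2) and (3) of Bredon's trick are the formal parts. For (2), the Verona complex carries a Mayer--Vietoris sequence because the defining conditions on $\tilde{\omega}$ and $\omega_\Sigma$ are preserved by restriction and glue via a partition of unity on $X$ subordinate to the cover $\{U,V\}$ (this is where paracompactness of $X$ and $\tilde{X}$ enters, exactly as in the De Rham and invariant-cohomology theorems above). Combined with the standard Mayer--Vietoris on $\tilde{X}$ for $L^{-1}(U),L^{-1}(V)$, the Five Lemma applied to the ladder
\[
\xymatrix{
H_v^{k-1}(U)\oplus H_v^{k-1}(V) \ar[r] \ar[d]^{\cong} & H_v^{k-1}(U\cap V)\ar[r]\ar[d]^{\cong} & H_v^{k}(U\cup V)\ar[r]\ar[d] & H_v^{k}(U)\oplus H_v^{k}(V)\ar[d]^{\cong} \\
H_{DR}^{k-1}(\tilde U)\oplus H_{DR}^{k-1}(\tilde V) \ar[r] & H_{DR}^{k-1}(\widetilde{U\cap V})\ar[r] & H_{DR}^{k}(\widetilde{U\cup V})\ar[r] & H_{DR}^{k}(\tilde U)\oplus H_{DR}^{k}(\tilde V)
}
\]
gives $P(U\cup V)$. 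For (3), if $\{U_\alpha\}$ is disjoint then $\Omega_v(\sqcup U_\alpha)\cong \prod_\alpha \Omega_v(U_\alpha)$ and $\Omega(L^{-1}(\sqcup U_\alpha))\cong \prod_\alpha \Omega(L^{-1}(U_\alpha))$, so cohomology distributes over the disjoint union and $P(\sqcup_\alpha U_\alpha)$ follows from $P(U_\alpha)$ for each $\alpha$. Bredon's trick then yields $P(X)$, i.e.\ $H_v(X)\cong H_{DR}(\tilde{X})$.

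The main obstacle is the local step around the singular strata: one has to verify that the Verona condition is compatible with the conical local model and that both the Verona cohomology of $\R^n\times cL_S$ and the ordinary cohomology of its unfolding $\R^n\times \tilde{cL_S}$ collapse in the same way. This requires a Verona-style Poincaré lemma for the cone, controlled by the behavior of $\tilde\omega$ near $\partial\tilde X$ and the relation $\tilde\omega|_{\partial\tilde X}=L_X^*(\omega_\Sigma)$; verifying it ultimately reduces the global isomorphism to a computation on the link, where induction on the depth of the stratification finishes the argument. The verification of Mayer--Vietoris for Verona forms (i.e.\ showing that a partition of unity on $X$ lifts to produce Verona partitions compatible with the $\omega_\Sigma$-component) is the other delicate technical point, but it is a formal consequence of the functoriality built into the definition of an unfolding.
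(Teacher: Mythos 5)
There is a genuine gap, and it sits exactly where you placed the weight of the argument: the base case. Your property $P(U)$ compares the Verona cohomology of $U\subset X$ with the De Rham cohomology of its preimage $L^{-1}(U)$ in the unfolding, and this comparison already fails on the distinguished conical charts, so condition (1) of Bredon's trick cannot be verified for your $P$. Indeed, for $U\cong\R^n\times cL_S$ the local unfolding is $\R^n\times\tilde L_S\times\,]-1,1[$, which deformation retracts onto the unfolded link $\tilde L_S$, not onto a point: the preimage of the cone vertex is an entire copy of $\tilde L_S$, so there is no "unfolded analogue" of the retraction to the cone point. Hence $H_{DR}(L^{-1}(U))\cong H_{DR}(\tilde L_S)$, which is in general nonzero in positive degrees, while Verona forms on a cone satisfy a Poincar\'e-type lemma and give $\R$ concentrated in degree $0$. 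Already for $X=cS^2$ the preimage of a conical chart has $H^2\cong\R$, so your claim that both local cohomologies are concentrated in degree zero is false and the induction never gets started.

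The paper (following the reference it cites for the details) runs Bredon's trick with the property $P(U):=$ "$H^*(U)\cong H_v^*(U)$", i.e.\ it compares the Verona cohomology of an open set with the ordinary (De Rham/singular) cohomology of the \emph{same} open set; despite the wording of the statement, the result is a De Rham--type theorem on the stratified space, not an identification of $H_v$ with the cohomology of the unfolding manifold. With that property the local step does work: on a conical chart both sides are $\R$ in degree $0$ --- the ordinary cohomology because $\R^n\times cL_S$ is contractible, the Verona side by a Verona--Poincar\'e lemma proved by induction on the depth of the stratification. Your formal steps are of the right shape and coincide with the intended argument: Mayer--Vietoris plus the five lemma for condition (2) (where you must justify that the partition of unity can be chosen controlled, i.e.\ consisting of Verona functions, so that the Verona Mayer--Vietoris sequence is exact) and additivity over disjoint unions for condition (3). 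What must be corrected is the choice of comparison map and the local computation; once $P(U)$ compares the two cohomologies of $U$ itself, the rest of your outline goes through.
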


\begin{proof}
The proof it is based on the Bredon's trick,for each open set $U\subset M$ we consider the property:
\[P(U):=\: "H^*( U)\cong H_v^*(U)"\]
\end{proof}

Finally, another result in which the power of Bredon's trick is used is found in \cite{Saralegui} and is related to stratified pseudomanifolds, it relates the intersection homology for a stratified pseudomanifold $X$ and the singular homology, we refers to \cite{Saralegui} for all the details.

\begin{theorem}
Let $X$ be a stratified pseudomanifold. Then:
\begin{itemize}
\item $ H^{\bar{p}}_{*}(X) = H_{*}(X - \Sigma_X)$ if $p < 0$;
\item  $H^{\bar{q}}_{*}(X) = H_{*} (X)$ if $q'geq t$ and $X$ is normal.
\end{itemize}
\end{theorem}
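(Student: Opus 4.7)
The plan is to apply Bredon's trick twice, once for each bullet, along exactly the same template as in the previous applications. Fix the ambient pseudomanifold $X$ and, for an open set $U\subset X$, consider the two properties
\[P_1(U) := \,\, ``H^{\bar p}_{*}(U)\cong H_{*}(U-\Sigma_X)\text{''} \qquad\text{and}\qquad P_2(U) := \,\, ``H^{\bar q}_{*}(U)\cong H_{*}(U)\text{''}.\]
Choose a basis of distinguished neighborhoods $\mathcal B = \{U_\alpha\}$, i.e.\ open sets of the form $\mathbb R^{n}\times cL_{S}$ coming from the definition of pseudomanifold, closed under finite intersection (intersecting two charts refines to a new chart of the same form). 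This is the cover to which Bredon's trick is to be applied.

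For condition 1, I would verify $P_1(U_\alpha)$ and $P_2(U_\alpha)$ by the cone formula for intersection homology. On $\mathbb R^{n}\times cL$ the intersection homology collapses to that of the cone $cL$, and the standard cone computation gives $H^{\bar p}_{*}(cL)\cong H_{*}(cL-\{\nu\})\cong H^{\bar p}_{*}(L)$ in the range where the perversity condition kills no chain, and $0$ elsewhere; for $\bar p<0$ chains are forced off every singular stratum, so the right-hand side matches $H_{*}(U_\alpha-\Sigma_X)$ via the deformation retract of $U_\alpha-\Sigma_X$ onto the regular part of the link. For $\bar q\geq\bar t$ with $X$ normal (so every link is connected), the cone formula gives $H^{\bar q}_{*}(cL)\cong H_{*}(cL)\cong \R$ concentrated in degree $0$, agreeing with singular homology of the contractible neighborhood. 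Thus $P_i(U_\alpha)$ holds for all $\alpha$.

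For condition 2, both intersection homology (with a fixed perversity) and singular homology satisfy the Mayer--Vietoris property for open subsets of $X$, and the natural comparison map between the two theories is a morphism of long exact sequences. Given $P_i(U)$, $P_i(V)$ and $P_i(U\cap V)$, assemble the two Mayer--Vietoris sequences into a ladder
\[\scalebox{0.85}{$\xymatrix@C=12pt{\cdots\ar[r]&H^{\bar p}_{k}(U)\oplus H^{\bar p}_{k}(V)\ar[r]\ar[d]^{\cong}&H^{\bar p}_{k}(U\cap V)\ar[r]\ar[d]^{\cong}&H^{\bar p}_{k}(U\cup V)\ar[r]\ar[d]&H^{\bar p}_{k-1}(U)\oplus H^{\bar p}_{k-1}(V)\ar[r]\ar[d]^{\cong}&\cdots\\ \cdots\ar[r]&H_{k}(U')\oplus H_{k}(V')\ar[r]&H_{k}(U'\cap V')\ar[r]&H_{k}(U'\cup V')\ar[r]&H_{k-1}(U')\oplus H_{k-1}(V')\ar[r]&\cdots}$}\]
where $U'$ denotes either $U-\Sigma_X$ or $U$ according to the bullet; the five lemma then yields $P_i(U\cup V)$.

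For condition 3, both theories commute with disjoint unions of open subsets, since chains have compact support: $H^{\bar p}_{*}(\bigsqcup_\alpha U_\alpha)\cong\bigoplus_\alpha H^{\bar p}_{*}(U_\alpha)$ and similarly for singular homology, so $P_i(\bigsqcup_\alpha U_\alpha)$ follows from $P_i(U_\alpha)$ for all $\alpha$. Bredon's trick then delivers $P_i(X)$. The delicate step, and the one I would spend most of the work on, is condition 1: the cone formula has to be matched against the right target cohomology in each perversity regime, and the normality hypothesis has to be invoked precisely to identify $H_{*}(cL)$ with $\R$ concentrated in degree $0$ in the second bullet. Once the distinguished neighborhoods are handled correctly, Mayer--Vietoris and disjoint additivity are routine.
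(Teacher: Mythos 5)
Your overall strategy --- running Bredon's trick on the comparison between intersection homology and singular homology, with the cone formula for the local case, Mayer--Vietoris plus the five lemma for unions, and additivity for disjoint families --- is the same one the paper follows, where the trick is applied to the property that the inclusion-induced maps $I_U : S_{*}(U-\Sigma_X)\to SC_{*}^{\bar p}(U)$ and $J_U : SC_{*}^{\bar p}(U)\to S_{*}(U)$ are quasi-isomorphisms. The first gap in your version is precisely this formulation: the property you feed into the trick must be a statement about these specific maps, not the bare existence of isomorphisms $H^{\bar p}_{*}(U)\cong H_{*}(U-\Sigma_X)$. Condition (2) requires the vertical arrows of your ladder to be the maps induced by $I$ (resp.\ $J$), and knowing only that abstract isomorphisms exist for $U$, $V$ and $U\cap V$ gives no commutative diagram to which the five lemma applies; likewise the disjoint-union step needs compatibility of the identifications. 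You gesture at ``the natural comparison map'', but then the local verification has to show that this map is an isomorphism on a chart, which is stronger than (and not implied by) a computation of the two groups separately.

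The second, more serious gap is in condition (1). The family of distinguished neighborhoods $\R^{n}\times cL_{S}$ is \emph{not} closed under finite intersections: the intersection of two conical charts is in general not again a chart, in contrast with the geodesically convex sets used for the De Rham theorem, so the cover you propose does not satisfy the hypotheses of Bredon's trick as stated. Moreover the local step is not a closed computation: the cone formula reduces $H^{\bar p}_{*}(cL_{S})$ to the intersection homology of the link $L_{S}$, which is itself a stratified pseudomanifold, so to identify it with $H_{*}(L_{S}-\Sigma_{L_{S}})$ in the first bullet (resp.\ with $H_{*}(L_{S})$ in the second, where normality enters through connectedness of the links) you already need the theorem for $L_{S}$. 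The missing idea is an induction on the depth of the stratification: prove the statement in depth zero, assume it for the links (which have strictly smaller depth), establish the property for open subsets of a chart $\R^{n}\times cL_{S}$ by a further local-to-global argument inside the chart, and only then run Bredon's trick on $X$. This is how the argument is organized in Saralegui's paper, to which the theorem is referred for details.
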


\begin{proof}
The proof is based in defining the following inclusion maps
\[I_X : S_{*}(X - \Sigma X)\to SC_{*}^{\bar{p}}(X)\]
\[J_X : SC_{*}^{\bar{p}}(X)\to S_{*}(X)\]

And apply the Bredon's trick to the property
\[P(U):\: "I_U\: \text{and}\, J_U\: \text{are quasi-isomorphism}"\]
\end{proof}


\affiliationone{
   M. Angel\\
   Grupo de \'Algebra y L\'ogica,\\ Universidad Central de Venezuela\\
   Av.Los Ilustres, Caracas 1010, Venezuela.
   \email{mauricio.angel@ciens.ucv.ve}}
\affiliationthree{%
   Current address:\\
   Quito-Ecuador\\
   \email{mauricio.angel.mat@gmail.com}}
\end{document}